\begin{document}

 \newtheorem{thm}{Theorem}[section]
 \newtheorem{cor}[thm]{Corollary}
 \newtheorem{lem}[thm]{Lemma}{\rm}
 \newtheorem{prop}[thm]{Proposition}

 \newtheorem{defn}[thm]{Definition}{\rm}
 \newtheorem{assumption}[thm]{Assumption}
 \newtheorem{rem}[thm]{Remark}
 \newtheorem{ex}{Example}
\numberwithin{equation}{section}
\def\la{\langle}
\def\ra{\rangle}
\def\glexe{\leq_{gl}\,}
\def\glex{<_{gl}\,}
\def\e{{\rm e}}

\def\x{\mathbf{x}}
\def\P{\mathbf{P}}
\def\M{\mathbf{h}}
\def\by{\mathbf{y}}
\def\bz{\mathbf{z}}
\def\F{\mathcal{F}}
\def\R{\mathbb{R}}
\def\H{\mathcal{H}}
\def\T{\mathbf{T}}
\def\N{\mathbb{N}}
\def\D{\mathbf{D}}
\def\V{\mathbf{V}}
\def\U{\mathbf{U}}
\def\K{\mathbf{K}}
\def\H{\mathbf{H}}
\def\Q{\mathbf{Q}}
\def\M{\mathbf{M}}
\def\oM{\overline{\mathbf{M}}}
\def\O{\mathbf{O}}
\def\C{\mathbb{C}}
\def\P{\mathbf{P}}
\def\Z{\mathbb{Z}}
\def\A{\mathbf{A}}
\def\V{\mathbf{V}}
\def\AA{\overline{\mathbf{A}}}
\def\B{\mathbf{B}}
\def\c{\mathbf{C}}
\def\L{\mathcal{L}}
\def\bS{\mathbf{S}}
\def\bnu{\boldsymbol{\nu}}
\def\I{\mathbf{I}}
\def\Y{\mathbf{Y}}
\def\X{\mathbf{X}}
\def\G{\mathbf{G}}
\def\f{\mathbf{f}}
\def\z{\mathbf{z}}
\def\v{\mathbf{v}}
\def\y{\mathbf{y}}
\def\d{\hat{d}}
\def\bx{\mathbf{x}}
\def\bI{\mathbf{I}}
\def\y{\mathbf{y}}
\def\g{\mathbf{g}}
\def\w{\mathbf{w}}
\def\b{\mathbf{b}}
\def\a{\mathbf{a}}
\def\u{\mathbf{u}}
\def\q{\mathbf{q}}
\def\e{\mathbf{e}}
\def\s{\mathcal{S}}
\def\cc{\mathcal{C}}
\def\co{{\rm co}\,}
\def\tg{\tilde{g}}
\def\tx{\tilde{\x}}
\def\tg{\tilde{g}}
\def\tA{\tilde{\A}}
\def\bphi{\boldsymbol{\phi}}
\def\supmu{{\rm supp}\,\mu}
\def\supp{{\rm supp}\,}
\def\cd{\mathcal{C}_d}
\def\cok{\mathcal{C}_{\K}}
\def\cop{COP}
\def\vol{{\rm vol}\,}
\def\om{\mathbf{\Omega}}

\title{Volume of sub-level sets of homogeneous polynomials}
\thanks{Research funded by the European Research Council (ERC) under the European Union's Horizon 2020 research and innovation program (grant agreement ERC-ADG 666981 TAMING).}
\author{Jean B. Lasserre}
\address{LAAS-CNRS and Institute of Mathematics\\
University of Toulouse\\
LAAS, 7 avenue du Colonel Roche\\
31077 Toulouse C\'edex 4, France\\
Tel: +33561336415}
\email{lasserre@laas.fr}

\begin{abstract}
Consider the sub-level set $\K:=\{\x: g(\x)\leq1\}$ of a nonnegative homogeneous polynomial $g$. We show that
its Lebesgue volume ${\rm vol}(\K)$ can be approximated
as closely as desired by solving
a sequence of generalized eigenvalue problems with respect to a pair of Hankel matrices of increasing size, whose entries are obtained 
in closed form. The methodology also extends to compact sets of 
the form $\{\x: a\leq g(\x)\leq b\}$ for non-homogeneous
polynomials with degree $d\ll n$. It reduces the volume computation in $\R^n$ to a ``volume" computation in $\R^d$ (where $d={\rm deg}(g)$) for a certain pushforward measure. Another extension to computing volumes of finite intersections of such sub-level sets
is also briefly described.\\

\noindent
{\bf MSC:} 65K05 68U05 65D18 65D30 65F15 68W25 68W30 90C22\\
\end{abstract}
\keywords{Computational geometry; volume computation, semi-algebraic sets, semidefinite programming, generalized eigenvalue.}
\date{}

\maketitle
\section{Introduction}
Let $g\in\R[\x]_t$ be a nonnegative homogeneous polynomial of degree $t$ (hence $t$ is even) with associated sub-level set
\begin{equation}
\label{set-K}
\K\,:=\,\{\x\in\R^n:\: g(\x)\leq1\,\}.
\end{equation} 
In this paper we describe  an efficient numerical scheme to approximate its Lebesgue volume
${\rm vol}(\K)$ (when finite) as closely as desired. 

\subsection*{Motivation} In addition of being an interesting mathematical problem on its own,
computing ${\rm vol}(\K)$ has also a practical interest outside computational geometry. For instance 
it has a direct link with computing the integral $\int\exp(-g(\x)) d\x$, called 
an {\em integral discriminant} in Dolotin and Morozov \cite{dolotin} and Morozov and Shakirov \cite{morozov}.
Indeed as proved in \cite{morozov}:
\begin{equation}
\label{discriminant}
{\rm vol}(\K)\,=\,\frac{1}{\Gamma(1+\frac{n+t}{2})}\int_{\R^n}\exp(-g(\x))\,d\x,\end{equation}
and to quote \cite{morozov}, {\em ``averaging with exponential weights is an important operation in statistical and quantum physics"}.
However, and again quoting \cite{morozov}, {\em ``despite simply looking, \eqref{discriminant} remains terra incognita"}. However, for special cases of homogeneous polynomials,  the authors in \cite{morozov} have been able to obtain a closed form expression for \eqref{discriminant}
 (hence equivalently for ${\rm vol}(\K)$) in terms of algebraic invariants of $g$.

Various consequences of formula \eqref{discriminant} have been described and exploited in Lasserre \cite{level-homog}.
For instance, ${\rm vol}(\K)$ is a convex function in the coefficients of the polynomial $g$.
In particular this  strong property has been  exploited for proving an extension of the L\"owner-John ellipsoid theorem \cite{lowner} which permits to completely characterize the sublevel set $\K$ (as in \eqref{set-K}) of minimum volume which contains a given set $\om\subset\R^n$ (when minimizing over all positive homogeneous polynomials $g$ of degree $t$). 
However,  computing this sub-level set of minimum volume that contains $\K$ is a computational challenge
since computing (or even approximating) the integral \eqref{discriminant} is a hard problem.

Our main result is that 
\begin{equation}
{\rm vol}(\K)\,=\,\lim_{d\to\infty} \lambda_{\min}(\A_d,\B_d),
\end{equation}
where $\lambda_{\min}(\A_d,\B_d)$ is the smallest generalized eigenvalue of the pair $(\A_d,\B_d)$.
More precisely, ${\rm vol}(\K)$ (and therefore the integral discriminant \eqref{discriminant}) 
is the limit of a monotone sequence of generalized eigenvalue problems with respect to a pair $(\A_d,\B_d)$ of given real Hankel matrices 
of size $d+1$. All entries of both Hankel matrices are easy to obtain in closed-form and the  Hankel matrix $\B_d$ depends 
only on the degree of $g$. Therefore, in principle the integral
\eqref{discriminant} can be approximated efficiently and as closely as desired by (linear algebra) eigenvalue routines.
To the best of our knowledge this result is quite new and in addition, even if we do not provide a closed form expression of \eqref{discriminant}, its
new characterization as a limit or eigenvalue problems could bring new insights. 
Moreover, a first set of numerical experiments on an academic problem
(retrieving the volume of the Euclidean unit ball in $\R^n$) to verify
the behavior of $\lambda_{\min}(\A_d,\B_d)$ as $d$ increases, shows a quick convergence with quite precise approximations 
obtained with relatively small $d$; for instance, with $d=8$,  the relative error is only $0.6\%$ for $n=8$ and $2.15\%$ for $n=9$.

\subsection*{Methodology} Computing (and even approximating) the Lebesgue volume of a convex body is hard (let alone non-convex bodies).
Often the only possibility is to use (non deterministic) Monte Carlo type methods which provide an estimate with statistical guarantees; that is, generate a sample of 
$N$ points according to the uniform distribution on $[-1,1]^n$ and then the ratio 
$\rho_N:=\mbox{(number of points in $\K$)}/N$ provides such an estimate. However $\rho_N$ is a random variable and is neither an upper bound or a lower bound on ${\rm vol}(\K)$. For a discussion on volume computation the interested reader is referred to \cite{sirev} and the many references therein.

However for basic semi-algebraic sets $\K\subset [-1,1]^n$, 
Henrion et al. \cite{sirev} have provided a general methodology to approximate ${\rm vol}(\K)$. 
it consists in solving a hierarchy $(\Q_d)_{d\in\N}$ of semidefinite programs\footnote{A semidefinite program (SDP) is a conic convex optimization problem
with a remarkable modeling power. It can be solved efficiently 
(in time polynomial in its input size) up to arbitrary precision fixed in advance; see e.g. Anjos and Lasserre \cite{handbook}}
of increasing size,
whose associated sequence of optimal values $(\rho_d)_{d\in\N}$ is monotone non increasing 	and converges to ${\rm vol}(\K)$.
An optimal solution of $\Q_d$ is a vector $\y\in\R^{s(2d)}$ (with $s(d)={n+d\choose n}$) whose
each coordinate $y_\alpha$,$\alpha\in\N^n_{2d}$, approximates the $\alpha$-moment of $\lambda_\K$, the restriction to $\K$
of the Lebesgue measure $\lambda$ on $\R^n$ (and therefore $y_0$ approximates ${\rm vol}(\K)$ from above).
An optimal solution of the dual semidefinite program $\Q^*_d$ provides the coefficients $(p_\alpha)_{\alpha\in\N^n_{2d}}$ of a polynomial 
$p\in\R[\x]_{2d}$ which approximates on $[-1,1]$ and from above, the (indicator) function $\x\mapsto 1_\K(\x)=1$ if $\x\in\K$ and $0$ otherwise.
In general the convergence $\rho_d\to{\rm vol}(\K)$ is slow because of a Gibbs phenomenon\footnote{The Gibbs' phenomenon appears at a jump discontinuity when one approximates a piecewise $C^1$ function with a continuous function, e.g. by its Fourier series.} when
one approximates the indicator function $1_\K$ by continuous functions. In \cite{sirev} the authors have proposed a
``trick" which accelerate drastically the convergence but at the price of loosing the monotone convergence $\rho_d\downarrow{\rm vol}(\K)$.
Another acceleration technique was provided in \cite{gauss} which still preserves monotone convergence. It uses the fact that 
moments of $\lambda_\K$ satisfy linear equality constraints that follows from Stokes' theorem. 

Recently, Jasour et al. \cite{jasour} have considered volume computation in the context of risk estimation in uncertain environments.
They have provided an elegant "trick" which reduces computing the $n$-dimensional volume ${\rm vol}(\K)$ to 
computing $\phi([0,1])$ for a certain pushforward measure $\phi$ on the real line,
whose moments are known. (With $\K$ as
in \eqref{set-K} the pushforward measure $\phi$ is with respect to the mapping $g$.)
This results in solving the hierarchy of semidefinite programs proposed 
in \cite{sirev}, but now for measures on the real line as opposed to measures on $\R^n$.
Solving the corresponding hierarchy of dual semidefinite programs
amounts to approximate the indicator of an interval on the real line
by polynomials of increasing degree, and whose coefficients minimize a linear criterion.

On the one hand, it yields drastical computational savings as passing from $\R^n$ to $\R$
is indeed a big and impressive progress. But on the other hand the (monotone) convergence remains slow
as one cannot one cannot apply the acceleration technique based on Stokes' theorem proposed e.g. in \cite{gauss}
because the density of $\phi$ is not known explicitly. In the examples provided in \S \ref{examples} for comparison, we can 
observe this typical (very) slow convergence. However
as the problem is now one-dimensional one may then solve many more steps of the resulting hierarchy of semidefinite programs
provided that one works with a nice basis of polynomials (e.g. Chebyshev polynomials)  to avoid numerical problems as much as possible.
Interestingly, pushforward measures were also used in Magron et al. \cite{projection} to compute the Lebesgue volume of $f(\K)$ for a polynomial mapping $f:\R^n\to\R^m$,
but in this case one has to compute moments of the measure in $\R^n$ whose pushforward measure is the Lebesgue measure on $f(\K)$,
and the resulting computation is still very expensive and limited to modest dimensions.

\subsection*{Contribution}

We provide a simple numerical scheme to approximate ${\rm vol}(\K)$ with $\K$ as in \eqref{set-K} and when
$g$ is positive and homogeneous. To do so we are inspired by the trick of using the pushforward
measure in Jasour et al. \cite{jasour}. The novelty here is that by taking into account the specific nature (homogeneity) of
$g$ in \eqref{set-K} we are able to drastically simplify computations. Indeed, the hierarchy of semidefinite programs
defined in \cite{jasour} can be replaced (and significantly improved) with computing a sequence
of scalars $(\tau_d)_{d\in\N}$ where $\tau_d$ is nothing less than the generalized minimum eigenvalue of two known 
Hankel matrices of size $d$, whose entries are obtained exactly in closed-form with no numerical error.
Therefore there is {\em no} optimization involved anymore.
Moreover, if one uses the basis of orthonormal polynomials w.r.t. the pushforward measure,
then $\tau_d$ is now the minimum eigenvalue of a single real symmetric matrix of size $d$.

At last but not least, the philosophy underlying the methodology also extends 
to arbitrary compact sets of the form $\{\x: a\leq  g(\x)\leq b\}\subset\R^n$ where the polynomial $g$ is not necessarily homogenous and positive. This can be potentially interesting when ${\rm deg}(g)\ll n$ because we reduce the initial Lebesgue volume computation in $\R^n$ to  a $\mu$-volume computation in $\R^d$ (where $d={\rm deg}(g)$) for a certain pushforward measure $\mu$ on $\R^d$ whose sequence of moments is easily obtained in closed form. Moreover, and crucial for the approach, we are still able to 
include additional constraints based on Stokes' theorem, which significantly accelerates the otherwise typically slow convergence. 
In \cite{jasour} the problem would be reduced to a
$\nu$-volume computation only in $\R$ but with no possibility to
include additional Stokes' constraints to accelerate the slow convergence.

\section{Notation and definitions}

\subsection*{Notation}
\label{definitions}
Let $\R[\x]$ denote the ring of polynomials in the variables $\x=(x_1,\ldots,x_n)$
and $\R[\x]_t\subset\R[\x]$ denote the vector space of polynomials  of degree at most $t$, hence of dimension $s(d)={n+t\choose n}$.
Let $\Sigma[\x]\subset\R[\x]$ denote the space of polynomials the are sums-of-squares (in short SOS polynomials) and
let $\Sigma[\x]_d\subset\R[\x]_{2d}$ denote the space of SOS polynomials of degree at most $2d$.
With $\alpha\in\N^n$ and $\x\in\R^n$, the notation $\x^\alpha$ stands for $x_1^{\alpha_1}\cdots x_n^{\alpha_n}$. Also for every $\alpha\in\N^n$, let $\vert\alpha\vert:=\sum_i\alpha_i$
and $\N^n_d:=\{\alpha\in\N^n:\vert\alpha\vert\leq d\}$.

The support of a Borel measure $\mu$ on $\R^n$ is the smallest closed set $\om$ such that $\mu(\R^n\setminus\om)=0$. Denote by $\mathcal{B}(\X)$ the Borel $\sigma$-field associated with a topological space $\X$, and $\mathscr{M}(\X)$ the space of finite Borel measures on $\X$.

Given two real symmetric matrices $\A,\mathbf{C}\in\R^{n\times n}$ denote by 
$\lambda_{\min}(\A,\mathbf{C})$ 
the smallest generalized eigenvalue with respect to the pair $(\A,\mathbf{C})$, that is, the largest scalar $\theta$ such that $\A\x=\theta\,\mathbf{C}\x$ for some vector $\x\in\R^n$.
When $\mathbf{C}$ is the identity matrix then $\lambda_{\min}(\A,\mathbf{C})$ is just the smallest eigenvalue of $\A$. Computing $\lambda_{\min}(\A,\mathbf{C})$ can be done via a pure and efficient linear algebra routine.
The notation $\A\succeq0$ (resp. $\A\succ0$) stands for $\A$ is positive semidefinite (resp. positive definite).

\subsection*{Moment matrix}
Given a real sequence $\bphi=(\phi_\alpha)_{\alpha\in\N^n}$, let
$\M_d(\bphi)$ denote the multivariate (Hankel-type) moment matrix defined by $\M_d(\bphi)(\alpha,\beta)=\phi_{\alpha+\beta}$ for all $\alpha,\beta\in\N^n_d$. 
For instance, in the univariate case $n=1$, with $d=2$, $\M_2$ is the Hankel matrix
\[\M_2(\bphi)\,=\,\left[\begin{array}{ccc} \phi_0 &\phi_1& \phi_2\\
\phi_1 &\phi_2& \phi_3\\
\phi_2 &\phi_3 &\phi_4\end{array}\right].\]
If $\bphi=(\phi_j)_{j\in\N}$ is the moment sequence of a Borel 
measure $\phi$ on $\R$ then $\M_d(\bphi)\succeq0$ for all $d=0,1,\ldots$. Conversely,
if $\M_d(\bphi)\succeq0$ for all $d\in\N$, then $\bphi$ is the moment sequence of some finite Borel measure $\phi$ on $\R$. The converse result is not true anymore in the multivariate case.

Let $\phi,\nu$ be two finite Borel measures on $\R$. The notation $\phi\leq\nu$
stands for $\phi(B)\leq\nu(B)$ for all $B\in\mathcal{B}(\R)$.

\begin{lem}
\label{dominance}
Let $\phi,\nu$ be two finite Borel measures on $\R$ with all moments $\bphi=(\phi_j)_{j\in\N}$
and $\bnu=(\nu_j)_{j\in\N}$ finite. Then $\phi\leq\nu$ if and only if
\[\M_d(\bphi)\,\preceq\,\M_d(\bnu),\quad\forall d=0,1,\ldots\]
\end{lem}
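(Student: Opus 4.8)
The plan is to prove the two implications separately, reducing everything to the positive semidefiniteness of the moment matrix of a nonnegative measure together with the Hamburger-type converse recalled just above. For the (easy) forward implication, suppose $\phi\leq\nu$. Then the set function $\psi:=\nu-\phi$ is a genuine finite nonnegative Borel measure on $\R$, since $\psi(B)=\nu(B)-\phi(B)\geq0$ for every $B\in\mathcal{B}(\R)$, and all its moments, namely $\bnu-\bphi$, are finite. The entries of a moment matrix depend linearly on the underlying moment sequence, so $\M_d(\bnu-\bphi)=\M_d(\bnu)-\M_d(\bphi)$; and since $\psi\geq0$ its moment matrix is positive semidefinite for every $d$. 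Hence $\M_d(\bnu)-\M_d(\bphi)\succeq0$, that is, $\M_d(\bphi)\preceq\M_d(\bnu)$ for all $d$.

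For the converse, assume $\M_d(\bphi)\preceq\M_d(\bnu)$ for all $d$. Writing $\boldsymbol{\psi}:=\bnu-\bphi$ and using linearity once more, this is precisely $\M_d(\boldsymbol{\psi})\succeq0$ for every $d$. By the converse part of the one-dimensional moment problem recalled above, $\boldsymbol{\psi}$ is then the moment sequence of some finite nonnegative Borel measure $\rho$ on $\R$. Consequently $\phi+\rho$ has moment sequence $\bphi+\boldsymbol{\psi}=\bnu$; that is, $\phi+\rho$ and $\nu$ share all their moments. If one can conclude from this that $\nu=\phi+\rho$, then $\nu-\phi=\rho\geq0$, which is exactly $\phi\leq\nu$, and the equivalence is established.

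The delicate point, and the real obstacle, is exactly this last identification $\nu=\phi+\rho$ from equality of moments: the Hamburger moment problem is in general indeterminate, so two measures sharing every moment need not coincide (for instance the perturbations $f(x)(1+\epsilon\sin(2\pi\ln x))$ of the log-normal density $f$ all have identical moments). The step therefore goes through unconditionally only when the representing measure of $\bnu$ is unique, i.e. when $\nu$ is moment-determinate; this is guaranteed by Carleman's condition and, in particular, automatically whenever $\nu$ is compactly supported, which is the relevant case for the pushforward measures used later in the paper. Under such a determinacy proviso the identification is immediate and the proof is complete; otherwise the converse implication can genuinely fail, so I would either add this hypothesis to the statement or restrict it to the compactly supported measures actually needed in the sequel.
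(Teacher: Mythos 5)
Your argument follows the same route as the paper's: the ``only if'' part via nonnegativity of $\nu-\phi$ and linearity of $\bphi\mapsto\M_d(\bphi)$, and the ``if'' part via the Hamburger-type converse recalled just before the lemma. The ``delicate point'' you isolate is real, and it is exactly the step the paper's own proof glosses over: from $\M_d(\bnu-\bphi)\succeq0$ for all $d$ one only gets \emph{some} positive measure $\rho$ with moment sequence $\bnu-\bphi$, and the paper silently identifies the signed measure $\nu-\phi$ with $\rho$. Without a determinacy hypothesis this identification can fail, and in fact the lemma as stated is false: with $f$ the log-normal density and $f_\epsilon(x)=f(x)\bigl(1+\epsilon\sin(2\pi\ln x)\bigr)$ its classical moment-preserving perturbations, take $\nu$ with density $f_1$ and $\phi$ with density $\tfrac12 f$; then $\bnu-\bphi$ equals the moment sequence of the positive measure $\tfrac12 f\,dx$, so $\M_d(\bphi)\preceq\M_d(\bnu)$ for all $d$, yet $\nu(B)<\phi(B)$ on the positive-measure set $B=\{x>0:\ 1+\sin(2\pi\ln x)<\tfrac12\}$. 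Your proposed remedy (assume $\nu$, or rather the sequence $\bnu-\bphi$, is moment-determinate, e.g.\ via Carleman's condition or compact support) is the correct fix, and it costs nothing in this paper: the lemma is only invoked for measures dominated by the pushforward $\#\lambda$, whose support is the compact interval $[0,\overline{b}_g]$, so determinacy holds automatically there. In short, your proof is the paper's proof done carefully, and the caveat you add should arguably be incorporated into the statement of the lemma.
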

\begin{proof}
{\em Only if part:} $\phi\leq\nu$ implies that $\nu-\phi$ with associated sequence
$\bnu-\bphi=(\nu_j-\phi_j)_{j\in\N}$ is a finite Borel measure on $\R$, and therefore:
\[\M_d(\bnu)-\M_d(\bphi)\,=\,\M_d(\bnu-\bphi)\,\succeq0,\quad d\in\N,\]
i.e., $\M_d(\bnu)\succeq\M_d(\bphi)$ for all $d\in\N$.

{\em If part:} If $\M_d(\bphi)\preceq\M_d(\bnu)$ for all $d\in\N$ then the sequence
$\bnu-\bphi=(\nu_j-\phi_j)_{j\in\N}$ satisfies $\M_d(\bphi-\bnu)\succeq0$ for all $d\in\N$. Therefore,
the moment sequence $\bnu-\bphi=(\nu_j-\phi_j)_{j\in\N}$
of the possibly signed measure $\nu-\phi$ is 
in fact the moment sequence of a finite Borel (positive) measure on $\R$, and therefore
$\nu\geq\phi$.
\end{proof}

\subsection*{Localizing matrix}
Given a real sequence $\bphi=(\phi_\alpha)_{\alpha\in\N^n}$ and a polynomial
$\x\mapsto p(\x):=\sum_\gamma p_\gamma\x^\gamma$, let
$\M_d(p\,\bphi)$ denote the real symmetric matrix defined by: 
\[\M_d(p\,\bphi)(\alpha,\beta)\,=\,\sum_\gamma p_\gamma\,\phi_{\alpha+\beta+\gamma},\quad\alpha,\beta\in\N^n_d.\]
For instance, with $n=1$, $d=2$ and $x\mapsto p(x)=x(1-x)$:
\[\M_2(p\,\bphi)\,=\,\left[\begin{array}{ccc} \phi_1-\phi_2 &\phi_2-\phi_3& \phi_3-\phi_4\\
\phi_2-\phi_3 &\phi_3-\phi_4& \phi_4-\phi_5\\
\phi_3-\phi_4 &\phi_4-\phi_5 &\phi_5-\phi_6\end{array}\right],\]
also a Hankel matrix.
\begin{lem}
\label{putinar}
Let $x\mapsto p(x)=x(1-x)$.

(i) If a real (finite) sequence $\bphi=(\phi_j)_{j\leq 2d}$ satisfies $\M_{d}(\bphi)\succeq0$ 
and $\M_{d-1}(p\,\bphi)\succeq0$, then there is a measure $\mu$ on $[0,1]$
whose moments $\boldsymbol{\mu}=(\mu_j)_{j\leq 2d}$ match $\bphi$.

(ii) If a real (infinite) sequence $\bphi=(\phi_j)_{j\in\N}$ satisfies $\M_{d}(\bphi)\succeq0$ 
and $\M_{d}(p\,\bphi)\succeq0$ for all $d$, then there is a measure $\mu$ on $[0,1]$
whose moments $\boldsymbol{\mu}=(\mu_j)_{j\in\N}$ match $\bphi$.
\end{lem}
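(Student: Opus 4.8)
The plan rests on one classical representation result for univariate polynomials, the Markov--Lukács theorem: a polynomial $h$ is nonnegative on $[0,1]$ if and only if it can be written as $h=\sigma_0+x(1-x)\,\sigma_1$ with $\sigma_0,\sigma_1$ sums of squares, and moreover if $\deg h\leq 2d$ one may take $\deg\sigma_0\leq 2d$ and $\deg\sigma_1\leq 2d-2$. The two hypotheses are then exactly the statement that the Riesz functional $L\colon\R[x]\to\R$ defined by $L(x^j)=\phi_j$ satisfies $L(\sigma_0)\geq0$ and $L(x(1-x)\sigma_1)\geq0$ for the relevant sums of squares: writing $\sigma_0=\sum_i g_i^2$ and $\sigma_1=\sum_i q_i^2$ one has $L(\sigma_0)=\sum_i\g_i^{\top}\M_{d}(\bphi)\,\g_i$ and $L(x(1-x)\sigma_1)=\sum_i\q_i^{\top}\M_{d-1}(p\,\bphi)\,\q_i$, where $\g_i,\q_i$ are the coefficient vectors of $g_i,q_i$ and $p(x)=x(1-x)$. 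The degree bookkeeping in Markov--Lukács matches the sizes of $\M_d(\bphi)$ and $\M_{d-1}(p\,\bphi)$ exactly, which is precisely why these two matrices appear.

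For part (ii) I would first invoke the converse Hamburger statement already recorded above: since $\M_d(\bphi)\succeq0$ for every $d$, there is a finite Borel measure $\mu$ on $\R$ whose moment sequence is $\bphi$. It then remains to show $\supp\mu\subseteq[0,1]$. The localizing hypothesis says precisely that $\int f(x)^2\,x(1-x)\,d\mu(x)\geq0$ for every polynomial $f$, and to exploit this I would test against $f(x)=x^N$ and $f(x)=(1-x)^N$. If $\mu$ charged $(1,\infty)$, then $\mu((1+\delta,\infty))>0$ for some $\delta>0$; on $[0,1]$ one has $x^{2N}x(1-x)\leq\tfrac14$, so the contribution of $[0,1]$ is at most $\tfrac14\,\mu([0,1])$ and stays bounded, while on $(1+\delta,\infty)$ the integrand is $\leq-(1+\delta)^{2N}\delta(1+\delta)$ and the remaining part of $\R\setminus[0,1]$ contributes nonpositively; letting $N\to\infty$ forces $\int x^{2N}x(1-x)\,d\mu\to-\infty$, contradicting nonnegativity. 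The symmetric test $f=(1-x)^N$ rules out mass on $(-\infty,0)$. This gives $\supp\mu\subseteq[0,1]$, and since $\mu$ already represents $\bphi$ it is the desired measure.

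For part (i) the difficulty is genuinely different: a truncated positive semidefinite sequence need not be a moment sequence at all, so one cannot simply halt a Hamburger argument at finite order. Instead I would argue by convex duality on the compact interval. The truncated moment cone $\mathcal{M}_{2d}=\{(\int x^j\,d\mu)_{j=0}^{2d}:\mu\in\mathscr{M}([0,1])\}$ is the cone generated by the convex hull of the compact moment curve $\{(1,x,\dots,x^{2d}):x\in[0,1]\}$, hence a \emph{closed} convex cone, and its dual cone is exactly the set of polynomials of degree $\leq 2d$ nonnegative on $[0,1]$. By biduality, $\bphi\in\mathcal{M}_{2d}$ as soon as $L(h)=\sum_j h_j\phi_j\geq0$ for every such $h$; and by the degree-bounded Markov--Lukács representation together with the identity of the first paragraph, $L(h)=L(\sigma_0)+L(p\,\sigma_1)\geq0$ follows from $\M_d(\bphi)\succeq0$ and $\M_{d-1}(p\,\bphi)\succeq0$. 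Thus $\bphi$ is the truncated moment sequence of some $\mu$ on $[0,1]$. The main obstacle, and the step I would treat most carefully, is exactly this finite (truncated) case: I must verify the exact degree match between the Markov--Lukács representation and the two Hankel matrices, and justify the closedness of $\mathcal{M}_{2d}$ so that genuine membership (not merely membership in the closure) is obtained — a feature that ultimately rests on the compactness of $[0,1]$.
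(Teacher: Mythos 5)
Your proof is correct. Note first that the paper does not actually prove this lemma: it simply points to \cite{lass-book}, so there is no in-text argument to compare against; what you have written is essentially the standard proof that those references contain. For part (i), the chain Markov--Luk\'acs $\Rightarrow$ nonnegativity of the Riesz functional on $P_{2d}([0,1])$ $\Rightarrow$ membership in the truncated moment cone via bipolarity is exactly right, and you correctly identify the two points that need care: the closedness of the truncated moment cone (which holds because it is the conic hull of a compact convex set not containing the origin, thanks to the constant first coordinate) and the degree bookkeeping. On the latter, the only detail worth making explicit is the odd-degree case of Markov--Luk\'acs: a polynomial of degree $2d-1$ nonnegative on $[0,1]$ comes with the representation $x\,\sigma_0+(1-x)\,\sigma_1$, and you need the identities $x=x^2+x(1-x)$ and $1-x=(1-x)^2+x(1-x)$ to convert it into the form $\sigma_0'+x(1-x)\sigma_1'$ with $\deg\sigma_0'\leq 2d$ and $\deg\sigma_1'\leq 2d-2$; once that is said, the degree bounds match the sizes of $\M_d(\bphi)$ and $\M_{d-1}(p\,\bphi)$ exactly as you claim. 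For part (ii), invoking the univariate Hamburger theorem (already recorded in the paper's notation section) and then localizing the support by testing the nonnegative functional against $x^{2N}x(1-x)$ and $(1-x)^{2N}x(1-x)$ is a clean and fully rigorous way to force $\supp\mu\subseteq[0,1]$; the sign analysis on each of the regions $[0,1]$, $[1,1+\delta]$, $(1+\delta,\infty)$ and their mirror images is correct, and all the integrals involved are finite because $\mu$ has moments of all orders. An equally standard alternative for (ii) is to apply part (i) for every $d$ and extract a weak-$*$ limit of representing measures on the compact set $[0,1]$, but your route is no longer and avoids the truncated statement altogether.
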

See for instance Lasserre \cite{lass-book} and the many references therein.

\subsection*{Pushforward measure} 

Let $\K\subset\R^n$ be a Borel set and $\lambda$ a probability measure on $\K$.
Given a measurable mapping $f:\K\to\R^p$, the pushforward measure 
of $\lambda$ on $\R^p$ w.r.t. $f$ is denoted by $\#\lambda$ and satisfies:
\[\#\lambda(B)\,:=\, \lambda(f^{-1}(B)),\qquad \forall B\in\mathcal{B}(\R^p).\]
In particular, its moments are given by
\begin{equation}
\label{push}
\#\lambda_\alpha\,:=\,\int_{\R^p} \z^\alpha\,\#\lambda(d\z)\,=\,\int_\K f(\x)^\alpha\,\lambda(d\x),\qquad \alpha\in\N^p.
\end{equation}

\subsection*{A version of Stokes' theorem}

Let $\om\subset\R^n$ be an open subset with boundary $\partial\om$ and
let $\x\mapsto \X(\x)$ be a given vector field. Then under suitable smoothness assumptions,
\begin{equation}
\label{stokes}
\int_\om {\rm Div}(X) f(\x)\,d\x+\int_\om \langle \X,\nabla f(\x)\rangle\,d\x\,=\,\int_{\partial\om}\langle \vec{n}_\x,\X\rangle f(\x)\,d\sigma,
\end{equation}
where $\vec{n}_\x$ is the outward pointing normal to $\om$ at $\x\in\partial\om$, and $\sigma$ is the $(n-1)$-dimensional Hausdorff measure on 
the boundary $\partial\om$; see e.g. Taylor \cite[Proposition 3.2, p. 128]{taylor}.

\section{Main result}
\label{homogeneous}
 Let $\B:=[-1,1]^n$ and $\K\subset \B$ be as in \eqref{set-K} with $\partial\K\subset\{\x: g(\x)=1\}$.
 Let  $\lambda$ be the Lebesgue measure on $\B$
normalized to a probability measure so that ${\rm vol}(\K)=2^n\lambda(\K)$. Let $g$ in \eqref{set-K} 
be a nonnegative and homogeneous polynomial of degree $t$. That is, $g(\lambda\x)=\lambda^t g(\x)$ for all $\lambda\in\R$, $\x\in\R^n$.
Denote by:
\begin{equation}
\label{bounds}
\overline{b}_g\,:=\,\max \{\,g(\x): \x\in\B\,\};\quad \underline{a}_g\,:=\,\min \{\,g(\x): \x\in\B\,\}.
\end{equation}
Notice that $\underline{a}_g=0$ because $0\in\B$ and $g$ is nonnegative with $g(0)=0$, and therefore $g(\B)= [0,\overline{b}_g]$.
We next follow an elegant idea of Jasour et al. \cite{jasour}, adapted to the present context.
It reduces the computation 
of ${\rm vol}(\K)$ (in $\R^n$) to a certain volume computation in $\R$, by using a particular pushforward measure of the Lebesgue measure $\lambda$ on $\B$.

Let $\#\lambda$ be the pushforward on the positive half line  of $\lambda$, 
by the 
polynomial mapping $g:\B\to [0,\overline{b}_g]$. From \eqref{bounds}, the support of $\#\lambda$  is the interval $I:=[0,\overline{b}_g]\subset\R$.
Then in view of \eqref{push}:
\begin{equation}
\label{mom}
\#\lambda_k\,:=\,\int_I z^k\,\#\lambda(dz)\,=\,\int_\B g(\x)^k\,\lambda(d\x),\quad k=0,1,\ldots
\end{equation}
All scalars $(\#\lambda_k)_{k\in\N}$ can be obtained in closed form as $g$ is a polynomial and $\lambda$ is the 
(normalized) Lebesgue measure on $\B$. Namely, writing the expansion
\[\x\mapsto g(\x)^k\,=\,\sum_{\alpha\in\N^n_{kd}} g_{k\alpha}\,\x^\alpha,\]
for some coefficients $(g_{k\alpha})$, one obtains:
\begin{equation}
\label{mom-1}
\#\lambda_k\,=\,2^{-n}\,\sum_{\alpha\in\N^n_{kd}} g_{k\alpha}\,\left(\prod_{i=1}^n \frac{(1-(-1)^{\alpha_i+1})}{\alpha_i+1}\right),\quad k=0,1,\ldots
\end{equation}
Next observe that $2^{-n}{\rm vol}(\K)=\#\lambda(g(\K))$ and note that $g(\K)=[0,1]$. Therefore following the recipe introduced in Henrion et al. \cite{sirev}, and with $S:=[0,1]\subset [0,\overline{b}_g]$:

\begin{equation}
\label{def-pb}
\#\lambda(S)\,=\,\max_{\phi\in\mathscr{M}(S)} \{\,\phi(S):\:\phi\,\leq\,\#\lambda\,\}.\end{equation}
Denote by $\phi^*$ the mesure on the real line which is the restriction to
$S\subset I$ of the pushforward measure $\#\lambda$.
That is, 
\begin{equation}
\label{phi-star}
\phi^*(B)\,:=\,\#\lambda(B\cap S), \qquad \forall B\in\mathcal{B}(\R).\end{equation}
Then $\phi^*$ is the unique optimal solution of \eqref{def-pb} and therefore $\phi^*(S)=\#\lambda(S)$
; see e.g. Henrion et al. \cite{sirev}. Then to approximate $\phi^*(S)$ from above, one possibility is to
solve the hierarchy of semidefinite relaxations:
\begin{equation}
\label{relax}
\rho_d\,=\,\displaystyle\max_{\bphi}\{\,\phi_0:\:0\preceq\M_d(\bphi)\,\preceq\,\M_d(\#\lambda);\quad \M_{d-1}(x(1-x)\,\bphi)\succeq0\,\}\end{equation}
where $\bphi=(\phi_j)_{j\leq 2d}$, $\M_d(\#\lambda)$ is the (Hankel) moment matrix (with moments up to order $2d$) associated with the 
pushforward measure $\#\lambda$, and
$\M_d(\bphi)$ (resp. $\M_{d-1}(z(1-z)\,\bphi)$) is the Hankel moment (resp. localizing) matrix (with moments up to order $2d$) 
associated with the sequence $\bphi$ and the polynomial $x\mapsto p(x)=x(1-x)$; see \S \ref{definitions}.
Indeed \eqref{relax} is a relaxation of \eqref{def-pb} and the sequence $(\rho_d)_{d\in\N}$
is monotone non increasing and converges to $\phi^*(S)=\#\lambda(S)$ from above; see e.g. \cite{sirev}.

The dual of \eqref{relax} is the semidefinite program
\begin{equation}
\label{relax-dual}
\begin{array}{rl}
\rho^*_d=\displaystyle\max_{p\in\R[x]_{2d}}\{\,\displaystyle\int p\,d\#\lambda:&p-1=\sigma_0+\sigma_2\,x(1-x)\\
&p,\sigma_0\in\Sigma[x]_d;\quad \sigma_1\in\Sigma[x]_{d-1}\,\},\end{array}
\end{equation}
and if $\K$ has nonempty interior then $\rho^*_d=\rho_d$.

This is the approach advocated by Jasour et al. \cite{jasour} and indeed this reduction 
of the initial (Lebesgue) volume computation in $\R^n$ 
\begin{equation}
\label{multi}
{\rm vol}(\K)\,=\,\max_{\phi\in\mathscr{M}(\K)}\,\{\,\phi(\K):\:\phi\,\leq\,\lambda\,\}
\end{equation}
to instead compute $\#\lambda([0,1])$ (in $\R$) by solving \eqref{def-pb} is quite interesting as it yields drastic computational savings; in fact, solving the multivariate analogues for \eqref{multi} of the 
univariate semidefinite relaxations \eqref{relax} for \eqref{def-pb}, becomes rapidly impossible even for moderate $d$, except for problems of modest dimension (say e.g. $n\leq 4$). 

However it is important to notice that in general the convergence 
$\rho_d\downarrow \#\lambda(S)$ is very slow and numerical problems are expected for large values of $d$. To partially remedy this problem the authors of \cite{jasour} suggest to express moment and localizing matrices in \eqref{relax} in the Chebyshev basis (as opposed to the standard monomial basis).
This allows to solve a larger number of relaxations but it does not change the typical slow 
convergence. The trick based on Stokes' theorem used in \cite{gauss} cannot be used here because 
the dominating (or reference) measure $\#\lambda$ in \eqref{def-pb} is {\em not} the Lebesgue measure $\lambda$ anymore (as in \eqref{multi}). On the other hand 
the trick to accelerate convergence used in \cite{sirev} can still be used, that is,
in \eqref{relax} one now maximizes 
$L_{\bphi}(x(1-x))=\phi_1-\phi_2$ instead of $\phi_0$. If $\bphi^d=(\phi^d_j)_{j\leq 2d}$
is an optimal solution of \eqref{relax} then $\phi^d_0\to\#\lambda(S)$ as $d$ increases but one looses the monotone convergence from above.

In the sequel we show that in the particular case where $g$ is positive and homogeneous then
one can avoid solving the hierarchy \eqref{relax} and instead solve a hierarchy of simple
generalized eigenvalue problems with {\em no} optimization involved and with a much faster convergence.

\subsection{Exploiting homogeneity}

\subsection*{A crucial observation}
Recall that  $2^{-n}{\rm vol}(\K)=\#\lambda(S)$. So let $\phi^*$ be
as in \eqref{phi-star}, and let $\bphi^*=(\phi^*_j)_{j\in\N}$ be its associated sequence of moments.
Consider the vector field $\x\mapsto \X(\x):=\x$. Then ${\rm Div}(\X)=n$.
In addition, as $g$ is homogeneous of degree $t$ then by Euler's identity, $\langle\x,\nabla g(\x)\rangle=t\,g(\x)$ for all $\x\in\R^n$. Recall that $\K\subset{\rm int}(\B)$ and therefore
$g(\x)=1$ for every $\x\in\partial\K$. Next, for every $j\in\N$, as $g(\x)^j=1$ on $\partial\K$ for all $j\in\N$, Stokes' Theorem \eqref{stokes} yields:
\begin{eqnarray*}
0&=&\int_{\partial\K}\langle\vec{n}_\x,\x\rangle\,(1-g(\x)^j)\,d\sigma
\quad\mbox{[as $g(\x)=1$ on $\partial\K$]}\\
&=&n\int_\K (1-g(\x)^j)\,\lambda(d\x)+\int_\K \langle \x,\nabla (1-g(\x)^j)\rangle\,\lambda(d\x)\quad\mbox{[by Stokes]}\\
&=&n\,\lambda(\K)-(n+jt)\,\int_\K g(\x)^j\,\lambda(d\x)\\
&=& n\,\#\lambda(S)-(n+jt)\,\int_{g(\K)}z^j\,\#\lambda(dz)\,=\,n\,\phi^*_0-(n+jt)\,\phi^*_j,
\end{eqnarray*}
so that we have proved:
\begin{lem}
\label{lem-phistar}
Let $\phi^*$ be the Borel measure on $\R$ which is the restriction to $S=[0,1]$ of the pushforward measure $\#\lambda$ on $I$.
Then its moments $\bphi^*=(\phi^*_j)_{j\in\N}$ satisfy :
\begin{equation}
\label{mom-phi}
\phi^*_0\,=\,2^{-n}\,{\rm vol}(\K);\quad \phi^*_j\,:=\,\frac{n}{n+jt}\,\phi^*_0,\quad j=1,2,\ldots
\end{equation}
\end{lem}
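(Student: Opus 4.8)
The plan is to obtain the moments $\phi^*_j$ from the pushforward identity and then to exploit the homogeneity of $g$ through Stokes' theorem in order to tie every $\phi^*_j$ to $\phi^*_0$.

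First I would settle the base case $j=0$. Combining the definition \eqref{phi-star} of $\phi^*$ with the pushforward formula \eqref{push} and the fact that $g^{-1}(S)=\K$ (since $g\geq0$ and $S=[0,1]$), I get
\[
\phi^*_j=\int_S z^j\,\#\lambda(dz)=\int_\K g(\x)^j\,\lambda(d\x),\qquad j\in\N.
\]
In particular $\phi^*_0=\lambda(\K)=2^{-n}\vol(\K)$, which is the first assertion of \eqref{mom-phi}.

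The heart of the argument is the recursion for $j\geq1$. I would invoke Stokes' theorem \eqref{stokes} on $\om={\rm int}(\K)$ with the radial vector field $\X(\x)=\x$, so that ${\rm Div}(\X)=n$, and with the test function $\x\mapsto f(\x)=1-g(\x)^j$. The decisive feature is that $\partial\K\subset\{\x:g(\x)=1\}$, whence $f\equiv0$ on $\partial\K$ and the boundary term on the right-hand side of \eqref{stokes} vanishes identically. This leaves
\[
n\int_\K(1-g(\x)^j)\,\lambda(d\x)+\int_\K\la\x,\nabla(1-g(\x)^j)\ra\,\lambda(d\x)=0.
\]
For the gradient term I would apply the chain rule $\nabla(1-g^j)=-j\,g^{j-1}\,\nabla g$ together with Euler's identity $\la\x,\nabla g(\x)\ra=t\,g(\x)$ valid because $g$ is homogeneous of degree $t$; this collapses the integrand to $-jt\,g(\x)^j$. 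Rearranging then gives $n\,\lambda(\K)=(n+jt)\int_\K g^j\,\lambda(d\x)$, that is $(n+jt)\,\phi^*_j=n\,\phi^*_0$, which is exactly the second assertion of \eqref{mom-phi}.

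The step I expect to require the most care is the justification of Stokes' theorem itself, since \eqref{stokes} is only stated under ``suitable smoothness assumptions''. As $g$ is a polynomial, $f$ is $C^\infty$ and $\nabla g$ is globally defined, so the sole genuine issue is the regularity of the level set $\{g=1\}=\partial\K$: one needs $\partial\K$ to be rectifiable with an almost-everywhere defined outward normal $\vec{n}_\x$. It suffices to recall that $\K$ has nonempty interior (as already used for the duality $\rho^*_d=\rho_d$) and to appeal to the form of the divergence theorem in Taylor \cite{taylor} that covers such polynomial sub-level sets; granted this, the remaining computation is immediate.
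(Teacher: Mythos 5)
Your proof is correct and follows essentially the same route as the paper: apply Stokes' theorem on $\K$ with the radial field $\X(\x)=\x$ and test function $1-g(\x)^j$, note that the boundary integral vanishes because $g\equiv1$ on $\partial\K$, and use Euler's identity $\langle\x,\nabla g\rangle=t\,g$ to collapse the gradient term to $-jt\,g^j$, yielding $n\,\phi^*_0=(n+jt)\,\phi^*_j$. Your added remark on the regularity needed to invoke Stokes is a reasonable supplement to the paper's implicit ``suitable smoothness'' assumption, but it does not change the argument.
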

Remarkably, Lemma \ref{lem-phistar} relates all moments of $\phi^*$  to its mass $\phi^*_0=2^{-n}{\rm vol}(\K)$ in very simple manner! However it now remains to compute $\phi^*_0$.

\subsection*{Computing $\phi^*_0$}

Define $\M^*_d$ to be the Hankel (moment) matrix with entries:
\begin{equation}
\label{hankel}
\M^*_d(k,\ell)\,:=\,\frac{n}{n+(k+\ell-2)\,t},
\quad k,\ell\,=\,1,2,\ldots,\end{equation}
so that $\phi^*_0\,\M_d^*=\M_d(\bphi^*)$ for all $d\in\N$, where $\M_d(\bphi^*)$ is the Hankel moment matrix associated with the sequence $\bphi^*$ (equivalently with the measure $\phi^*$).

Similarly,  define $\M_{d,x(1-x)}^*$ to be the Hankel matrix with entries:
\begin{equation}
\label{hankel-loc}
\M^*_{d,x(1-x)}(k,\ell)\,:=\,\frac{n}{n+(k+\ell-1)\,t}-\frac{n}{n+(k+\ell-2)\,t},
\quad k,\ell\,=\,1,2,\ldots,\end{equation}
so that $\phi^*_0\,\M_{d,x(1-x)}^*=\M_{d}(x(1-x)\,\bphi^*)$ is the localizing matrix 
associated with $\bphi^*$ and the polynomial $x\mapsto x(1-x)$, for all $d\in\N$. As $\phi^*$ is supported on $[0,1]$ then $\phi^*_0\,\M^*_{d,x(1-x)}\succeq0$ for all $d$, which in turn implies
\begin{equation}
\label{aux}
\M^*_{d,x(1-x)}\,\succeq\,0,\qquad  \forall d\in\N,
\end{equation}
because $\phi^*_0>0$.
\begin{thm}
For each $d\in\N$, let $\M^*_d$ be as in \eqref{hankel} and let $\M_d(\#\lambda)$ be the Hankel 
moment matrix associated with $\#\lambda$ (hence with sequence of moments as in \eqref{mom-1}). Then :
\begin{equation}
\phi^*_0\,=\,\lim_{d\to\infty}\,\lambda_{\min}(\M_d(\#\lambda),\M^*_d),
\end{equation}
i.e., $\phi^*_0$ is the limit of 
a sequence  of minimum generalized eigenvalues associated with the pair $(\M_d(\#\lambda),\M_d^*))$, $d\in\N$.
\end{thm}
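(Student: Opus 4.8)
\emph{Plan.} Set $\tau_d:=\lambda_{\min}(\M_d(\#\lambda),\M^*_d)$. The idea is to trap $\tau_d$ between $\phi^*_0$ (a lower bound valid at every level $d$) and a quantity forced down to $\phi^*_0$ in the limit, using essentially only Lemma~\ref{dominance} together with the identity $\M_d(\bphi^*)=\phi^*_0\,\M^*_d$. First observe that $\M^*_d\succ0$: since $\M^*_d=(\phi^*_0)^{-1}\M_d(\bphi^*)$ is a positive multiple of the moment matrix of $\phi^*$, a measure with infinite support on $[0,1]$, each of its moment matrices is positive definite. Hence the smallest generalized eigenvalue has the characterization $\tau_d=\max\{\theta:\ \M_d(\#\lambda)\succeq\theta\,\M^*_d\}$, which is the form I exploit.

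\emph{Lower bound and boundedness.} Because $\phi^*$ is the restriction of $\#\lambda$ to $S\subset I$, we have $\phi^*\leq\#\lambda$; Lemma~\ref{dominance} then yields $\M_d(\bphi^*)\preceq\M_d(\#\lambda)$, i.e.\ $\phi^*_0\,\M^*_d\preceq\M_d(\#\lambda)$, for all $d$. Thus $\theta=\phi^*_0$ is admissible in the characterization of $\tau_d$, giving $\phi^*_0\leq\tau_d$ for every $d$. Testing the Rayleigh quotient with the first coordinate vector $\e_1$ and using $\M^*_d(1,1)=1=\#\lambda_0$ gives $\tau_d\leq1$, so $(\tau_d)_d$ stays in the compact interval $[\phi^*_0,1]$.

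\emph{Upper bound on the limit.} Let $\tau^*$ be any subsequential limit, say $\tau_d\to\tau^*$ along an infinite index set. By maximality, $\tau_d\,\M^*_d\preceq\M_d(\#\lambda)$ for each $d$; passing to the leading principal block shows $\tau_d\,\M^*_{d_0}\preceq\M_{d_0}(\#\lambda)$ for every fixed $d_0$ and all $d\geq d_0$, and letting $d\to\infty$ along the subsequence gives $\tau^*\,\M^*_{d_0}\preceq\M_{d_0}(\#\lambda)$ for each $d_0$. Equivalently, the moment sequence $(\tau^*/\phi^*_0)\,\bphi^*$ of the measure $(\tau^*/\phi^*_0)\,\phi^*$ has all its moment matrices dominated by those of $\#\lambda$, so the ``if'' part of Lemma~\ref{dominance} gives $(\tau^*/\phi^*_0)\,\phi^*\leq\#\lambda$. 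Since $\phi^*$ is $\#\lambda$ restricted to $S$, evaluating this on $B=S$ yields $(\tau^*/\phi^*_0)\,\phi^*_0\leq\#\lambda(S)=\phi^*_0$, i.e.\ $\tau^*\leq\phi^*_0$. Combined with $\tau^*\geq\phi^*_0$, every subsequential limit equals $\phi^*_0$, and boundedness then forces $\tau_d\to\phi^*_0$.

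\emph{Main obstacle.} The lower bound is immediate; the real content is the upper bound, whose crux is the limit passage. The generalized eigenvalue controls only finitely many moments at each level, so one must push the semidefinite domination through $d\to\infty$ and then reconvert it into a genuine \emph{measure} domination via the nontrivial ``if'' direction of Lemma~\ref{dominance} --- it is exactly this step that pins the inflation factor $\tau^*/\phi^*_0$ down to $1$. (Alternatively one can show $\tau_d\leq\rho_d$ by verifying that $(\tau_d/\phi^*_0)\,\bphi^*$ is feasible for the relaxation \eqref{relax}, using \eqref{aux} for the localizing constraint, and then invoke the known convergence $\rho_d\downarrow\#\lambda(S)=\phi^*_0$.)
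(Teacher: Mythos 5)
Your proof is correct, and it shares the paper's overall sandwich structure: the lower bound $\phi^*_0\leq\tau_d$ comes from $\phi^*\leq\#\lambda$ exactly as in the paper, and the upper bound comes from passing the semidefinite domination $\tau_d\,\M^*_d\preceq\M_d(\#\lambda)$ to the limit. Where you genuinely diverge is in the final conversion back to measures. The paper takes the limiting sequence $\mu_j=\tau^*\,n/(n+jt)$, invokes Lemma \ref{putinar} (via the localizing condition \eqref{aux}) to represent it by a measure $\mu$ on $[0,1]$, and then uses that $\mu$ is feasible for the optimization problem \eqref{def-pb}, whose optimal value is $\phi^*_0$, to conclude $\tau^*=\mu([0,1])\leq\phi^*_0$. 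You instead observe that this limiting sequence is, by Lemma \ref{lem-phistar}, exactly $(\tau^*/\phi^*_0)\,\bphi^*$, i.e.\ already the moment sequence of the explicit measure $(\tau^*/\phi^*_0)\,\phi^*$; the ``if'' direction of Lemma \ref{dominance} then gives $(\tau^*/\phi^*_0)\,\phi^*\leq\#\lambda$, and evaluating at $B=S$ (where $\phi^*(S)=\#\lambda(S)=\phi^*_0$) pins down $\tau^*\leq\phi^*_0$. This bypasses both the representation result of Lemma \ref{putinar} and the appeal to optimality of $\phi^*$ in \eqref{def-pb}, at the cost of needing $\phi^*_0>0$ to form the ratio (which the paper also assumes, cf.\ the remark after \eqref{aux}). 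Two further cosmetic differences: you replace the paper's monotonicity of $(\tau_d)_{d\in\N}$ by the cruder bound $\tau_d\leq1$ plus a subsequence argument (both work, and monotonicity in fact follows from your own principal-submatrix observation), and you take care to justify the characterization $\tau_d=\max\{\theta:\theta\,\M^*_d\preceq\M_d(\#\lambda)\}$ via $\M^*_d\succ0$, a point the paper uses without comment.
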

\begin{proof}
For every $d\in\N$, let 
\begin{equation}
\label{tau-d}
\tau_d:=\lambda_{\min}(\M_d(\#\lambda),\M^*_d)\,=\,\max\,\{\,\tau:\:\tau\,\M_d^*\,\preceq\,\M_d(\#\lambda)\,\},\end{equation}
which  implies $\tau_d\,\M_d^*\preceq\M_d(\#\lambda)$. In addition, $\tau_d\,\M_{d,x(1-x)}^*\succeq0$
follows from \eqref{aux}. On the other hand, as $\phi^*\leq\#\lambda$, we also have
$\phi^*_0\M^*_d=\M_d(\bphi^*)\preceq\M_d(\#\lambda)$ for all $d\in\N$. Hence,
$\phi^*_0\leq\tau_d$ for all $d\in\N$, and the sequence $(\tau_d)_{d\in\N}$ is monotone non increasing. 

We next show that $\tau_d\downarrow\phi^*_0$ as $d\to\infty$.
As $\tau_d\geq\phi^*_0$ for all $d$,  $\lim_{d\to\infty}\tau_d=\tau^*\geq\phi^*_0$. 
Consider the sequence $\boldsymbol{\mu}=(\mu_j)_{j\in\N}$ defined by:
\[\mu_j\,=\,\tau^*\,\frac{n}{n+jt},\quad j\in\N.\]
Then from $\tau_d\M_d^*\succeq0$ for all $d$, and the convergence $\tau_d\to\tau^*$,
we obtain 
\[\tau^*\,\M^*_d\,=\,\M_d(\boldsymbol{\mu})\,\succeq\,0,\quad \forall d\in\N.\]
For similar reasons 
\[\tau^*\,\M^*_{d,x(1-x)}\,=\,\M_d(x(1-x)\,\boldsymbol{\mu})\,\succeq\,0,\quad \forall d\in\N.\]
By Lemma \ref{putinar},  $\boldsymbol{\mu}$ is the moment sequence of a measure
$\mu$ supported on $[0,1]$ with mass $\mu_0=\mu([0,1])=\tau^*$, and by construction we also have $\mu\leq\#\lambda$. Therefore $\mu\in\mathscr{M}(S)$ is a feasible solution of \eqref{def-pb}
which implies $\mu([0,1])\leq\phi^*_0$. But on the other hand,
\[\phi^*_0\,\leq\,\tau^*\,=\,\mu([0,1])\,\leq\,\phi^*_0,\]
which yields the desired result $\tau^*=\phi^*_0$.
\end{proof}
Therefore to approximate ${\rm vol}(\K)$ from above, one proceeds as follows. Start with $d=1$ and then
\begin{itemize}
\item Compute all moments of $\#\lambda$ up to order $2d$ by \eqref{mom-1}.
\item Compute $\tau_d:=\lambda_{\min}(\M_d(\#\lambda),\M^*_d)$ 
\item set $d=d+1$ and repeat.
\end{itemize}
This produces the required monotone sequence of upper bounds $(\tau_d)_{d\in\N}$ on $\phi^*_0$, which  converges to $\phi^*_0=2^{-n}{\rm vol}(\K)$ as $d$ increases.
Finally,  the following result shows that $\tau_d\leq \rho_d$.
\begin{prop}
For each $d\in\N$, let $\rho_d$ (resp. $\tau_d$) be as in \eqref{relax} (resp. \eqref{tau-d}). Then
$\rho_d\geq\tau_d$.
\end{prop}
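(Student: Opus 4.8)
The plan is to exhibit, for each fixed $d$, a feasible point of the semidefinite program \eqref{relax} whose objective value equals $\tau_d$. Since $\rho_d$ is the maximum of $\phi_0$ over all feasible $\bphi$, producing one feasible sequence with $\phi_0=\tau_d$ immediately gives $\rho_d\geq\tau_d$.

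First I would recall that by \eqref{tau-d} one has $\tau_d=\max\{\tau:\tau\,\M_d^*\preceq\M_d(\#\lambda)\}$, so that the matrix inequality $\tau_d\,\M_d^*\preceq\M_d(\#\lambda)$ holds. The natural candidate is the truncated sequence $\bphi=(\phi_j)_{j\leq 2d}$ obtained by imitating the exact structure uncovered in Lemma \ref{lem-phistar}, namely $\phi_j:=\tau_d\,n/(n+jt)$ for $j=0,1,\ldots,2d$. By the very definitions \eqref{hankel} and \eqref{hankel-loc} of the Hankel matrices $\M_d^*$ and $\M_{d,x(1-x)}^*$, this choice yields $\M_d(\bphi)=\tau_d\,\M_d^*$ and $\M_{d-1}(x(1-x)\,\bphi)=\tau_d\,\M_{d-1,x(1-x)}^*$.

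Next I would verify that $\bphi$ satisfies all three constraints of \eqref{relax}. The upper bound $\M_d(\bphi)=\tau_d\,\M_d^*\preceq\M_d(\#\lambda)$ is exactly the defining inequality of $\tau_d$ recalled above. For the lower bound $\M_d(\bphi)\succeq0$ it suffices to observe that $\M_d^*\succeq0$: indeed $\phi^*_0\,\M_d^*=\M_d(\bphi^*)$ is a genuine moment matrix of the measure $\phi^*$, hence positive semidefinite, and $\phi^*_0>0$. The localizing constraint $\M_{d-1}(x(1-x)\,\bphi)=\tau_d\,\M_{d-1,x(1-x)}^*\succeq0$ follows from \eqref{aux} applied at level $d-1$ together with $\tau_d\geq0$. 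Thus $\bphi$ is feasible, and its objective value is $\phi_0=\tau_d\,n/(n+0\cdot t)=\tau_d$, whence $\rho_d\geq\tau_d$.

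There is no serious analytic obstacle here; the entire content is the observation that the generalized eigenvalue $\tau_d$ encodes precisely a feasible moment sequence for the one-dimensional relaxation \eqref{relax}. The only point that requires a moment's care is distinguishing the positive semidefiniteness of $\M_d^*$ itself — which is what makes the lower bound $\M_d(\bphi)\succeq0$ hold — from the already-recorded inequality \eqref{aux} for the localizing matrix $\M_{d,x(1-x)}^*$. Both are needed, and both ultimately stem from the fact that $\phi^*$ is a genuine positive measure supported on $[0,1]$ with positive mass $\phi^*_0$.
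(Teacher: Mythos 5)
Your proposal is correct and follows essentially the same route as the paper: it builds the candidate sequence $\phi_j=\tau_d\,n/(n+jt)$ for $j\leq 2d$, checks feasibility in \eqref{relax} using the defining inequality $\tau_d\,\M_d^*\preceq\M_d(\#\lambda)$ together with \eqref{aux}, and reads off $\phi_0=\tau_d\leq\rho_d$. The only difference is that you spell out why $\M_d^*\succeq0$ (via $\phi^*_0\,\M_d^*=\M_d(\bphi^*)$ and $\phi^*_0>0$), a point the paper leaves implicit.
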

\begin{proof}
Consider the sequence $\boldsymbol{\mu}=(\mu_j)_{j\leq 2d}$ defined by:
\[\mu_j\,=\,\tau_d\,\frac{n}{n+jt},\quad j\leq 2d.\]
Then from \eqref{tau-d}, $\tau_d\M_d^*=\M_d(\boldsymbol{\mu})$ and therefore, $0\preceq\M_d(\boldsymbol{\mu})\preceq\M_d(\#\lambda)$. Similarly
$\tau_d\,\M_{d-1,x(1-x)}=\M_{d-1}(x(1-x)\,\boldsymbol{\mu})\succeq0$. In other words,
the sequence $\boldsymbol{\mu}$ is a feasible solution of \eqref{relax}, which implies 
$\mu_0\,(=\tau_d)\,\leq\rho_d$.
\end{proof}
Hence the above eigenvalue procedure (with no optimization involved)
provides a monotone sequence of upper bounds on $\phi^*_0$
that are better than the sequence of upper bounds $(\rho_d)_{d\in\N}$ obtained by solving the hierarchy of semidefinite relaxations \eqref{relax}. 
Notice also that the matrix $\M_d^*$ depends only on the degree of $g$ and not on $g$ itself.

In fact there is a simple interpretation of this improvement. In Problem \eqref{def-pb} and in its
associated semidefinite relaxations \eqref{relax}, one may include the additional constraints 
\begin{equation}
\label{additional}
\phi_j\,=\,n\,\phi_0/(n+jt), \quad\forall j\leq 2d,
\end{equation}
coming from Stokes' theorem applied to $\phi^*$;
see Lemma \ref{lem-phistar}. Indeed we are allowed to do that because $\phi^*$ (which is the unique optimal solution of \eqref{def-pb}) satisfies these additional constraints. If it it does not change the optimal value of \eqref{def-pb} it changes that of \eqref{relax} as it makes the corresponding relaxation stronger and therefore $\tau_d\leq\rho_d$ for all $d$. 

\begin{rem}
\label{rem-1}
{\rm
(i) If one uses the basis of orthonormal polynomials with respect to the pushforward measure $\#\lambda$ then the new moment matrix $\tilde{\M}_d(\#\lambda)$ (expressed in this basis) is the identity matrix and therefore $\tau_d^{-1}$ is the maximum eigenvalue of the corresponding matrix $\tilde{\M}_d^*$ (also expressed in that basis). This basis of orthonormal polynomials can be obtained from the decomposition $\M_d(\#\lambda)=\D\D^T$ for triangular matrices 
$\D$ and $\D^T$. For more details on multivariate orthogonal polynomials, the interested reader is referred to Denkl and Xu \cite{denkl} and the many references therein.

(ii) Alternatively one may also use an orthonormal  basis associated with the sequence of moments $(n/(n+jt)_{j\in\N}$. In this case
the new moment matrix $\tilde{\M}_d^*$ expressed in this basis is the identity matrix and $\tau_d$ is now the minimum eigenvalue
of the new moment $\tilde{\M}_d(\#\lambda)$ expressed in this basis. Notice that the orthonormal basis does not depend on $g$ (only on its degree).

(iii) Finally, another possibility is to simply use the basis of Chebyshev polynomials. Then computing $\tau_d$ (still a generalized eigenvalue problem) involves matrices with much better numerical conditioning.}
 
\end{rem}
\subsection{Some numerical examples}
\label{examples}
To show how this approximation of ${\rm vol}(\K)$ from above by a sequence of 
eigenvalue problems of increasing size is much more efficient than solving
the hierarchy of semidefinite programs \eqref{relax} as suggested in \cite{jasour}, we have considered
a favorable case for \eqref{relax}. We chose $\K$ to be the Euclidean unit ball $\{\x: \Vert\x\Vert\leq 1\}$
with Lebesgue volume $\pi^{n/2}/\Gamma(1+n/2)$ and the ball  $\B$ that contains $\K$ is the smallest
one, i.e., $\B=[-1,1]^n$. Indeed, the smaller is the ball $\B$, the better are the approximation by $\rho_d$ in 
\eqref{relax}.

We first describe the first two steps to appreciate the simplicity of the approach.
Let $n=2$ and $g=\Vert\x\Vert^2=x_1^2+x_2^2$, and $\B=[-1,1]^2$, so that
${\rm vol}(\K)=\pi$. Then:
\[\M_1^*\,=\,\left[\begin{array}{cc}1 & 1/2\\ 1/2 & 1/3\end{array}\right];\quad \M_1(\#\lambda)\,=\,
\left[\begin{array}{cc}1 & 2/3\\ 2/3 & 28/45\end{array}\right]\]
This yields $4\cdot\tau_1\approx3.48$ which is already a  good upper bound on $\pi$ whereas
$4\cdot\rho_1=4$.
\[\M_2^*\,=\,\left[\begin{array}{ccc}1 & 1/2 &1/3\\ 1/2 & 1/3 &1/4\\
1/3 & 1/4 &1/5\end{array}\right];\quad \M_2(\#\lambda)\,=\,
\left[\begin{array}{ccc}1 & 2/3 &28/45\\ 2/3 & 28/45 &24/35\\
28/45 & 24/35 &2/9+8/21+6/25\end{array}\right]\]
This yields $4\cdot\tau_2\approx3.1440$ while 
$4\cdot\rho_2=3.8928$. Hence $4\tau_2$ already provides a very good upper bound on $\pi$
with only moments of order $4$. To appreciate the difference in speed of convergence 
between $\rho_d$ and $\tau_d$, Table \ref{tb:versus} displays both values $\tau_d$ and $\rho_d$  in the case of $n=4$
variables and $d=1,\ldots,5$. While the convergence $\tau_d\to 4.9348$ is quite fast 
with a relative error of $0.03\%$ at step $d=5$, the convergence $\rho_d\to 4.9348$ is extremely 
slow as $\rho_5\approx 8.499$ only; see Figure \ref{fig:compare}

\begin{table}
\begin{center}
\begin{tabular}{|c|c|c|c|c|c|}
\hline
$d$ & $d=1$ & $d=2$ & $d=3$ & $d=4$ & $d=5$ \\
\hline
$\rho_d$& $12.19$ & $11.075$ &   $9.163$ &   $8.878$ &   $ 8.499$\\
\hline
$\tau_d$ & $6.839$ &  $5.309$ &    $5.001$ &    $4.945$ &   $ 4.936$\\
\hline
\end{tabular}
\caption{$n=4$, $\rho^*=4.9348$;  $\rho_d$ versus $\tau_d$  \label{tb:versus}}
\end{center}
\end{table}
\begin{figure}[h!]
\includegraphics[width=0.6\textwidth]{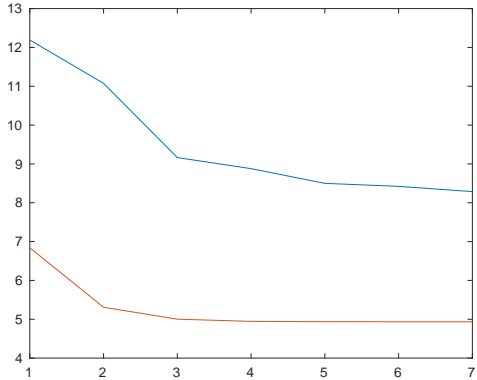}
\caption{$n=4$; Comparing $\tau_d$ (red below) with $\rho_d$ (blue above) \label{fig:compare} } 
\end{figure}

We next provide results for the same problem but now in larger dimensions
$n=5,8,9,10$ in Table \ref{tb:n=5}, Table \ref{tb:n=8}, Table \ref{tb:n=9}, and Table \ref{tb:n=10} respectively.
From inspection we can observe a fast and regular decrease in the value $2^n\tau_d$ as $d$ increases,
and similarly for the relative error.

\begin{table}
\begin{center}
\begin{tabular}{|c|c|c|c|c|c|c|}
\hline
$d$ & $d=1$ & $d=2$ & $d=3$ & $d=4$ & $d=5$ &$d=6$\\
\hline
$\tau_d$& $6.839$ & $5.309$ & $5.001$ & $4.945$ & $4.936$ & $4.935$\\
\hline
$\frac{100(\tau_d-\rho^*)}{\rho^*}$ & $38.6\%$ &$7.58\%$ &$1.35\%$ &$0.22\%$ &$0.03\%$& $0.004\%$ \\
\hline
\end{tabular}
\caption{$n=4$, $\rho^*=4.9348$;  $\tau_d$ and relative error \label{tb:n=4}}
\end{center}
\end{table}

\begin{table}
\begin{center}
\begin{tabular}{|c|c|c|c|c|c|c|}
\hline
$d$ & $d=1$ & $d=2$ & $d=3$ & $d=4$ & $d=5$ &$d=6$  \\
\hline

$2^n\tau_d$ &$10.2892$ & $6.5248$ & $5.57$ & $5.3347$ & $5.2788$ & $5.266$\\
\hline
$\frac{(2^n\tau_d-\rho^*)}{\rho^*}$ & $95\%$ & $23.95\%$ & $5.92\%$ & $1.34\%$ & $0.28\%$ &$0.05\%$ \\
\hline
\end{tabular}
\caption{$n=5$, $\rho^*=5.26$;  $\tau_d$ and relative error \label{tb:n=5}}
\end{center}
\end{table}
\begin{table}
\begin{center}
\begin{tabular}{|c|c|c|c|c|c|c|c|c|}
\hline
$d$ & $d=1$ &$d=2$ & $d=3$ & $d=4$ & $d=5$ &$d=6$ &$d=7$ &$d=8$\\
\hline
$2^n\tau_d$ &   $43.16$ &  $15.04$ &   $ 7.97$ &   $5.569$ &    $4.639$   & $4.272$ &    $4.133$ &   $4.083$\\
\hline
$\frac{(2^n\tau_d-\rho^*)}{\rho^*}$ & $963\%$ & $270\%$  & $96\%$ &   $37\%$ &  $14\%$ &   $5.26\%$ & $1.83\%$ &$ 0.60\%$\\
\hline
\end{tabular}
\caption{$n=8$, 
$\rho^*=4.0587$;  $\tau_d$ and relative error \label{tb:n=8}}
\end{center}
\end{table}
\begin{table}
\begin{center}
\begin{tabular}{|c|c|c|c|c|c|c|c|c|}
\hline
$d$ & $d=1$ &$d=2$ & $d=3$ & $d=4$ & $d=5$ &$d=6$ &$d=7$ &$d=8$\\
\hline
$2^n\tau_d$ &   $73.406$ &  $21.682$ &  $9.801$ &   $ 5.935$ & $4.413$ &  $3.764$ &  $3.485$ &    $3.369$\\
\hline
$\frac{(2^n\tau_d-\rho^*)}{\rho^*}$ & $2125\%$ &   $557\%$ &  $197\%$  & $79\%$ &   $33.8\%$ &   $14.1\%$ &   $5.6\%$  &
 $2.15\%$\\
 \hline
\end{tabular}
\caption{$n=9$, 
$\rho^*=3.298$;  $\tau_d$ and relative error \label{tb:n=9}}
\end{center}
\end{table}

\begin{table}
\begin{center}
\begin{tabular}{|c|c|c|c|c|c|c|}
\hline
$d$ & $d=2$ & $d=3$ & $d=4$ & $d=5$ &$d=6$ &$d=7$ \\
\hline
$2^n\tau_d$ &   $32.432$ &  $12.657$ &    $6.662$ &     $4.375$  &  $3.379$ & $2.921$\\
\hline
$\frac{(\tau_d-\rho^*)}{\rho^*}$ & $1171\%$ & $396.3\%$ & $161\%$ & $71.6\%$ & $32.5\%$ &$14.54\%$\\
\hline
\end{tabular}
\caption{$n=10$, $\rho^*=2.55$;  $\tau_d$ and relative error \label{tb:n=10}}
\end{center}
\end{table}
For $n=10$ and $d=8$, we have encountered numerical problems because the Hankel matrix
$\M_8(\#\lambda)$ is ill-conditioned and then one should use another basis of polynomials
in which to express the matrices $\M^*_8$ and $\M_8(\#\lambda)$; see Remark \ref{rem-1}.

\subsection*{Influence of the size of the ball $\B$}

If one increases the size of the ball $\B=[-r,r]^n$ that contains $\K$ then one expects a slower convergence
and this is why it is recommended to take for $\B$ the smallest ball that contains $\K$. 
An appropriate choice is the box $\prod_{i=1}^n[-u_i,u_i]$ where $u_i$ (resp. $v_i$) is
a lower bound (resp. upper bound) as close as possible to $\min\{\,x_i:\x\in\K\,\}$ 
(resp. $\max\{x_i:\x\in\K\,\}$), which can be computed by the first step of the Moment-SOS hierarchy
described in \cite{lass-book}. From results displayed in Table \ref{tb:influence} with $r=1$ and $r=1.3$, one observes
that even though the convergence is a bit slower it is still quite good. The initial value $\tau_1$ is significantly higher
but then $\tau_d$ (with $r=1.3$) still decreases very fast; see Figure \ref{fig-push}.
\begin{table}
\begin{center}
\begin{tabular}{|c|c|c|c|c|c|c|c|c|}
\hline
$d$ & $d=1$ & $d=2$ & $d=3$ & $d=4$ & $d=5$ &$d=6$ &$d=7$& $d=8$ \\
\hline
$r=1.3;\:(2r)^n\tau_d$ &   $26.345$ &   $11.744$ &   $7.622$ &   $6.149$ &   $5.585$ &    $5.373$ &   $5.299$ & $5.275$ \\
\hline
$r=1;\:(2r)^n\tau_d$ &   $10.289$ &    $6.524$  &  $5.575$ &    $5.334$&     $5.278$& $5.266$ &    $5.264$ &  $5.2639$\\
\hline
\end{tabular}
\caption{$n=5$; ${\rm vol}(\K)=5.2638$; Influence of the size of $\B=[-r,r]^n$ with $r=1$ and $r=1.3$ \label{tb:influence}}
\end{center}
\end{table}
\begin{figure}[h!]
\includegraphics[width=0.6\textwidth]{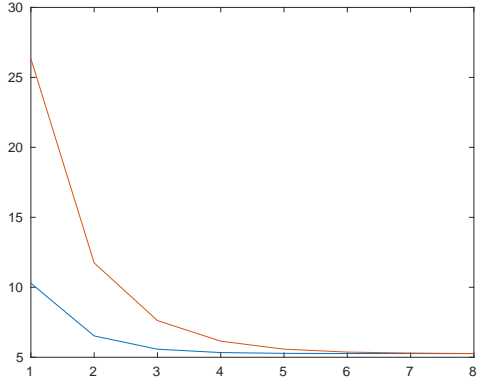}
\caption{$n=8$; Comparing $\tau_d$ with $r=1.3$ (red above) and $r=1$ (blue below)\label{fig-push} } 
\end{figure}

\section{Extensions}
In this section we discuss two extensions of the above methodology,
when:
\begin{itemize}
\item  $\K:=\{\x : a\leq g(\x)\leq b\}\subset(-1,1)^n$ and $g$ is not homogeneous anymore.
\item $\K$ is now $\{\x: g_j(\x)\geq0, \:j=1,\ldots,m\}\subset (-1,1)^n$  and each $g_j$ is homogeneous
(with one of them being nonnegative).
\end{itemize}
In the second extension, again following  Jasour et al. \cite{jasour}, one considers 
the pushforward of the Lebesgue measure by the polynomial mapping $g:\B\to\R^m$
which maps $\x\in\B$ to the vector $(g_j(\x))_{j=1}^m\in\R^m$. Then the initial Lebesgue volume 
computation in $\R^n$ is reduced to an  equivalent ``measure" computation problem
of the form \eqref{multi} but now in $\R^m$ (instead of $\R$) and for which one may apply the 
hierarchy of semidefinite programs described in \cite{sirev}. But as we did in
\S \ref{homogeneous}, we can exploit again the homogeneity of
the $g_j$'s to strengthen the semidefinite relaxations defined in \cite{jasour}, by introducing additional linear constraint 
coming from an appropriate application of Stokes' theorem. The only difference with the univariate case treated in \S \ref{homogeneous}  is that
the problem is not an eigenvalue problem anymore.

The first extension to the non homogeneous case is perhaps more interesting.
We now write $g$ as a sum of homogeneous polynomials of increasing degree $1$, $2,\ldots,{\rm deg}(g)$.
and consider again a pushforward of the Lebesgue mesure $\lambda$ by 
the polynomial mapping $g:\B\to\R^{{\rm deg}(g)}$, $\x \mapsto (g_1(\x),\ldots,g_{{\rm deg}(g)}(\x))$.

\subsection{The non-homogeneous case}

Let $\B=[-1,1]^n$, and suppose that $\K\subset\R^n$ is now described by:
\begin{equation}
\label{set-K-1}
\K\,:=\,\{\,\x: a\,\leq\,g(\x)\,\leq\, b\,\},
\end{equation}
for some $a,b\in\R$, where $g\in\R[\x]_t$, and $\K\subset (-1,1)^n$, possibly after scaling. With no loss of generality we may and will assume that $g(0)=0$ and write
\[\x\mapsto g(\x)\,=\,\sum_{k=1}^t g_k(\x),\quad \x\in\R^n,\]
where for each $1\leq k\leq t$, $g_k$ is a homogeneous polynomial of degree $k$.

We next see how to adapt  the previous methodology of \S \ref{homogeneous} to this more general case in a relatively simple manner. To simplify the exposition and alleviate notation, we will describe the quadratic case $t=2$ but it will become obvious to understand how to
proceed for $t>2$. So with $t=2$,  $g=g_1+g_2$ with 
$g_1$ (resp. $g_2$) homogeneous of degree $1$ (resp. $2$).

Consider the pushforward measure $\#\lambda$ on $\R^2$ of $\lambda$ on $\B$, by the polynomial mapping:
\[g:\B\to\R^2,\quad
\x\mapsto g(\x)=\left[\begin{array}{c}g_1(\x) \\ g_2(\x)\end{array}\right],\quad \x\in\B.\]
Let $\Theta:=g(\B)\subset\R^2$ be the support of the pushforward measure $\#\lambda$, and observe that for each $i,j\in\N$:
\begin{equation}
\label{multi-mom}
\#\lambda_{ij}\,:=\,\int_{\Theta} z_1^iz_2^j\,d\#\lambda(\z)\,=\,
\int_\B g_1(\x)^i \,g_2(\x)^j \,d\lambda(\x),\end{equation}
can be obtained in closed form. Letting
\[S\,:=\,g(\K)\,=\,\{\z\in\Theta: \: a\,\leq\,z_1+z_2\,\leq b\,\},\]
we obtain ${\rm vol}(\K)=\#\lambda(S)$.
Next, recall that (see  \eqref{def-pb} in \S \ref{homogeneous}):
\begin{equation}
\label{multi-vol}
{\rm vol}(\K)\,=\,\#\lambda(S)\,=\,
\displaystyle\max_{\phi\in\mathscr{M}(S)}\,\{\,\phi(S):\:\phi\,\leq\,\#\lambda\,\}
\end{equation}
and $\phi^*$ is the unique optimal solution of \eqref{multi-vol}.

Let $\z\mapsto \tilde{h}(\z):=(b-z_1-z_2)(z_1+z_2-a)$.
The semidefinite relaxations associated with \eqref{multi-vol}
read:
\begin{equation}
\label{relax-multi-1}
\rho_d\,=\,\displaystyle\max_{\boldsymbol{\phi}}\,\{\,\phi_0:\:
0\,\preceq\M_d(\boldsymbol{\phi})\,\preceq\,\M_d(\#\lambda);\quad
\M_d(\tilde{h}\,\boldsymbol{\phi})\,\succeq0\,\},
\end{equation}
where the maximization is over finite bivariate sequences
$\boldsymbol{\phi}=(\phi_{ij})_{i+j\leq 2d}$.

Following the same philosophy as in \S \ref{homogeneous}, we are going to use some additional information on the optimal solution $\phi^*$ of \eqref{multi-vol} to strengthen the semidefinite relaxations 
\eqref{relax-multi-1}. To do so we again use Stokes' theorem.

\subsection*{Stokes} Recall that $\K\subset (-1,1)^n$ and therefore,
$\partial\K\subset\{\x\in\B: h(\x)=0\,\}$ where
$\x\mapsto h(\x):=(b-g_1(\x)-g_2(\x))(g_2(\x)+g_2(\x)-a)$. Therefore by Stokes' theorem,
\begin{eqnarray}
\nonumber
0&=& n\,\int_\K g_1(\x)^i \,g_2(\x)^j\,h(\x)\,d\lambda(\x)\\
\label{multi-stokes}
&&+\int_\K \langle \x,\nabla(g_1(\x)^i \,g_2(\x)^j\,h(\x))\,d\lambda(\x),
\quad\forall i,j\in\N.
\end{eqnarray}
Developing and using homogeneity of $g_1,g_2$, one obtains:
\begin{eqnarray*}
0&=&n\,\displaystyle\int_{S}\left[\,(n+i+2j)\,z_1^iz_2^j\,(b-z_1-z_2)(z_1+z_2-a)\,\right.\\
&&\left.+(z_1+2z_2)\,z_1^iz_2^j\,(a+b-2z_1-2z_2)\,\right].\,d\#\lambda(\z).\end{eqnarray*}
Equivalently, introducing the polynomial $q_{ij}\in\R[\z]$, 
\begin{eqnarray}
\label{poly-q}
\z\mapsto q_{ij}(\z)&:=&(n+i+2j)\,z_1^iz_2^j\,(b-z_1-z_2)(z_1+z_2-a)\\
\nonumber
&&+(z_1+2z_2)\,z_1^iz_2^j\,(a+b-2z_1-2z_2).
\end{eqnarray}
for every $i,j\in\N$, one obtains:
\begin{equation}
\label{multi-stokes-ij}
\int_S q_{ij}(\z)\,d\phi^*(\z)\,=\,0,\quad \forall i,j\in\N.
\end{equation}
Notice that \eqref{multi-stokes-ij}
is  a  linear relation between moments of $\phi^*$, the optimal solution
of \eqref{multi-vol}. That is,
let $\boldsymbol{\phi}^*=(\phi^*_{ij})_{i,j\in\N}$ be the sequence of moments of $\phi^*$ on $S$, and let 
$L_{\boldsymbol{\phi}^*}:\R[\z]\to\R$ be the Riesz functional
\[q\:(=\sum_{i,j}q_{ij}z_1^i\,z_2^j)\quad\mapsto
L_{\boldsymbol{\phi}^*}(q)\,:=\,\sum_{i,j}q_{ij}\,\phi^*_{ij},\quad q\in\R[\z].\]
Then \eqref{multi-stokes-ij} reads
\begin{equation}
\label{stokes-constraints}
L_{\boldsymbol{\phi}^*}(q_{ij})\,=\,0,\quad i,j\in\N,\end{equation}

So we can strengthen the relaxations \eqref{relax-multi-1} by adding the additional (Stokes) moments constraints \eqref{stokes-constraints}, that is, one solves the semidefinite programs:
\begin{equation}
\label{relax-multi-stokes}
\begin{array}{rl}
\tau_d\,=\,\displaystyle\max_{\boldsymbol{\phi}}\,\{\,\phi_0:&
0\,\preceq\M_d(\boldsymbol{\phi})\,\preceq\,\M_d(\#\lambda);
\quad\M_d(\tilde{h}\,\boldsymbol{\phi})\,\succeq0;\\
&L_{\boldsymbol{\phi}^*}(q_{ij})\,=\,0,\quad \mbox{for all $(i,j)$ s.t. ${\rm deg}(q_{ij})\leq 2d$}\,\},
\end{array}
\end{equation}
which is clearly a strengthening of \eqref{relax-multi-1}.
\begin{prop}
\label{prop-multi}
Let $\rho_d$ (resp. $\tau_d$) be as in \eqref{relax-multi-1}
(resp. \eqref{relax-multi-stokes}), $d\in\N$. Then:
\begin{equation}
\label{prop-multi-1}
\#\lambda(S)\,\leq\,\tau_d\,\leq\rho_d\quad\mbox{for all $d$, and
$\tau_d\downarrow\#\lambda(S)$ as $d$ increases.}
\end{equation}
\end{prop}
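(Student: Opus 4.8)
The plan is to establish the three assertions in \eqref{prop-multi-1} separately: the chain of inequalities $\#\lambda(S)\leq\tau_d\leq\rho_d$ for each fixed $d$, and then the monotone convergence $\tau_d\downarrow\#\lambda(S)$. The inequality $\tau_d\leq\rho_d$ is immediate and requires no work: the feasible set of \eqref{relax-multi-stokes} is the feasible set of \eqref{relax-multi-1} intersected with the additional linear (Stokes) equality constraints $L_{\boldsymbol{\phi}^*}(q_{ij})=0$, so \eqref{relax-multi-stokes} maximizes the same objective $\phi_0$ over a smaller set; hence its optimal value is no larger. For the lower bound $\#\lambda(S)\leq\tau_d$, I would exhibit the truncated moment sequence of $\phi^*$ itself as a feasible point of \eqref{relax-multi-stokes}. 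Indeed, since $\phi^*\leq\#\lambda$, Lemma \ref{dominance} (applied in the bivariate setting) gives $0\preceq\M_d(\boldsymbol{\phi}^*)\preceq\M_d(\#\lambda)$; since $\phi^*$ is supported on $S=\{\z\in\Theta: a\leq z_1+z_2\leq b\}$ where $\tilde h\geq 0$, the localizing matrix $\M_d(\tilde h\,\boldsymbol{\phi}^*)\succeq0$; and by construction $\phi^*$ satisfies \eqref{stokes-constraints}, so the Stokes equalities hold exactly. Thus $\boldsymbol{\phi}^*$ is feasible, and its objective value $\phi^*_0=\#\lambda(S)$ is a lower bound on the maximum $\tau_d$.

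The substantive part is the convergence $\tau_d\downarrow\#\lambda(S)$. Monotonicity in $d$ follows because any feasible sequence for \eqref{relax-multi-stokes} at level $d+1$ restricts (by truncation) to a feasible sequence at level $d$ with the same value of $\phi_0$, so $\tau_{d+1}\leq\tau_d$; combined with $\tau_d\geq\#\lambda(S)$, the limit $\tau^*:=\lim_{d\to\infty}\tau_d\geq\#\lambda(S)$ exists. To prove $\tau^*\leq\#\lambda(S)$, I would mimic the argument of the univariate Theorem. For each $d$ let $\boldsymbol{\phi}^{(d)}$ be an optimal solution of \eqref{relax-multi-stokes}. Using the uniform bound $\M_d(\boldsymbol{\phi}^{(d)})\preceq\M_d(\#\lambda)$, the individual moments $\phi^{(d)}_{ij}$ are bounded (the diagonal of $\M_d(\#\lambda)$ controls them, and off-diagonal entries are controlled by Cauchy--Schwarz on the positive semidefinite matrix). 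A diagonal/compactness argument then extracts a subsequence along which every coordinate converges to a limit sequence $\boldsymbol{\phi}^\infty$ with $\phi^\infty_{00}=\tau^*$. Passing to the limit in the semidefinite constraints, $\boldsymbol{\phi}^\infty$ satisfies $0\preceq\M_d(\boldsymbol{\phi}^\infty)\preceq\M_d(\#\lambda)$ and $\M_d(\tilde h\,\boldsymbol{\phi}^\infty)\succeq0$ for every $d$.

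The main obstacle is the final step: recovering from $\boldsymbol{\phi}^\infty$ an actual measure $\mu$ on $S$ with $\mu\leq\#\lambda$, so that $\mu$ is feasible for \eqref{multi-vol} and hence $\tau^*=\mu(S)\leq\#\lambda(S)$. In the univariate Theorem this is where Lemma \ref{putinar} supplies the representing measure from positivity of the moment and localizing matrices. In the bivariate case one cannot invoke the one-dimensional Hamburger/Stieltjes machinery directly; instead I would note that $\boldsymbol{\phi}^\infty$ is dominated by the moment sequence of $\#\lambda$, which is itself determinate and compactly supported on $\Theta$, and use this domination (via Lemma \ref{dominance}) to conclude that $\boldsymbol{\phi}^\infty$ is the moment sequence of a genuine finite measure $\mu\leq\#\lambda$. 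The localizing condition $\M_d(\tilde h\,\boldsymbol{\phi}^\infty)\succeq0$ for all $d$ then forces $\supp\mu\subset\{\tilde h\geq0\}\cap\Theta=S$. Therefore $\mu\in\mathscr M(S)$ with $\mu\leq\#\lambda$ is feasible for \eqref{multi-vol}, giving $\tau^*=\mu(S)\leq\#\lambda(S)$; combined with $\tau^*\geq\#\lambda(S)$ this yields $\tau^*=\#\lambda(S)$ and completes the proof. The delicate point throughout is that the multivariate moment problem is not automatically solvable from positive semidefiniteness alone, and the argument must lean on the domination $\mu\leq\#\lambda$ by the known, well-behaved reference measure to sidestep this difficulty.
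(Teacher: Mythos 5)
Your proof is correct, and the first half (feasibility of $\boldsymbol{\phi}^*$ for \eqref{relax-multi-stokes} giving $\#\lambda(S)\leq\tau_d$, and the inclusion of feasible sets giving $\tau_d\leq\rho_d$) coincides with the paper's argument. Where you diverge is the convergence $\tau_d\downarrow\#\lambda(S)$: the paper disposes of it in one line by a squeeze, since $\#\lambda(S)\leq\tau_d\leq\rho_d$ and $\rho_d\downarrow\#\lambda(S)$ is already known for the unstrengthened hierarchy \eqref{relax-multi-1} from \cite{sirev}; no compactness or moment-problem argument is needed at all. You instead reprove convergence from scratch --- extracting a limit sequence $\boldsymbol{\phi}^\infty$ from the optimal solutions, recovering a representing measure via domination by $\#\lambda$, and localizing its support in $S$ --- which is essentially a reproof of the \cite{sirev} convergence result specialized to this setting. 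Your route is sound and has the merit of being self-contained, but note that Lemma \ref{dominance} as stated in the paper is univariate; to make your final step rigorous you must invoke (or prove) its bivariate analogue, which holds here precisely because the dominating sequence is that of the compactly supported measure $\#\lambda$, so that $0\preceq\M_d(\boldsymbol{\phi}^\infty)\preceq\M_d(\#\lambda)$ forces geometric growth of the moments and hence existence and determinacy of a representing measure $\mu\leq\#\lambda$; the localizing condition then places $\supp\mu$ in $S$. With that lemma supplied, your argument closes; without it, the squeeze via $\rho_d$ is the shorter and cleaner path.
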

\begin{proof}
That $\tau_d\leq\rho_d$ for all $d$,  is straightforward and similarly for the monotonicity of the sequence $(\tau_d)_{d\in\N}$. Next,
as $\phi^*$ is the optimal solution of \eqref{multi-vol}, its sequence of
moments $\boldsymbol{\phi}^*=(\phi^*_{ij})$ is feasible for \eqref{relax-multi-stokes}, with associated value $\phi^*_0=\#\lambda(S)={\rm vol}(\K)$. Hence $\tau_d\geq\#\lambda(S)$. Then the convergence $\tau_d\downarrow\#\lambda(S)$ follows from 
$\rho_d\downarrow\#\lambda(S)$.
\end{proof}

The difference with the homogeneous case treated in \S \ref{homogeneous} is that now computing $\tau_d$ requires solving the semidefinite program \eqref{relax-multi-stokes} whereas in \S \ref{homogeneous} computing $\tau_d$ reduces to solving a generalized eigenvalue problem, hence with no optimization involved. However, notice that instead of solving the (costly) $n$-variate semidefinite relaxations associated with \eqref{multi} in $\R^n$, we now solve similar semidefinite relaxations but for a {\em bivariate} problem on the plane. In addition the (convergence) acceleration technique based on 
Stokes's theorem can also be implemented (see \eqref{relax-multi-stokes}).

\subsection{Multi-homogeneous constraints}

Another extension is when $\K=\{\x: g_j(\x)\leq  1,\:j=1,\ldots,m\,\}\subset(-1,1)^n$ for a family $(g_j)_{j=1}^m$ of homogeneous polynomials, 
not necessarily of same degree, say ${\rm deg}(g_j)=t_j$, and at least one of them is positive on $(\R\setminus\{0\})^n$. In this case one may proceed again as suggested in Jasour et al. \cite{jasour}. Now
$\#\lambda$ is the pushforward on $\R^m$ of $\lambda$ on $\B$, by the mapping:
    \[g:\B\to\R^m,\quad g(\x)\,=\,\left[\begin{array}{c} g_1(\x)\\ \cdots\\ g_m(\x)\end{array}\right].\]
   
   In particular $\#\lambda$ has its moments defined by:
   \[\#\lambda_\alpha\,=\,\int_\B g_1(\x)^{\alpha_1}\cdots g_m(\x)^{\alpha_m}\,\lambda(d\x) 
   \,=\,\int_{g(\B)}\z^\alpha\,\#\lambda(d\z),\quad\forall\alpha\in\N^m.\]
   Again all moments $\#\lambda_\alpha$ can be computed in closed form, and again with $S=[0,1]^m$
   $2^{-n}{\rm vol}(\K)=\#\lambda(S)$. 
   Let us describe how the generalization works for the case $m=2$. Again denote by $\phi^*$ on $\R^2$
   the restriction of $\#\lambda$ to $S$ and let $\bphi^*=(\phi^*_{ij})_{i,j\in\N}$ with:
     \[ \phi^*_{ij}\,:=\,\int_{S} z_1^i z_2^j\,\phi^*(d\z),\quad \forall i,j=0,1,\ldots.\]
   So the bivariate analogues of the semidefinite relaxations \eqref{relax} read:
      \begin{equation}
   \label{relax-multi}
\begin{array}{rl}
   \rho_d=\displaystyle\max_{\bphi}\{\,\phi_0:&0\,\preceq\, \M_d(\bphi)\,\preceq \,\M_d(\#\lambda)\\
   &\M_d(x_j(1-x_j)\,\bphi) \succeq0,\quad j=1,2\,\},
   \end{array}
   \end{equation}
   where $\bphi=(\phi_{ij})_{i+j\leq 2d}$, and $\M_d(\bphi)$ (resp. $\M_{d-1}(x_j(1-x_j)\,\bphi)$, $j=1,2$)
   is the moment (resp. localizing) matrix associated with $\bphi$ (resp. with $\bphi$ and $\x\mapsto x_j(1-x_j)$, $j=1,2$). Then $\rho_d\downarrow\#\lambda(S)$ as $d\to\infty$. Again the semidefinite 
   relaxations \eqref{relax-multi} are a lot cheaper to solve than those associated with the $n$-variate problem \eqref{multi}.\\
   
      As we did for the univariate case we can improve the above convergence by adding additional constraints that must be satisfied at the optimal solution $\phi^*$ of \eqref{multi}.
Again $\phi^*_0=2^{-n}{\rm vol}(\K)$.
   Let $(i,j,k,\ell)\in\N^4$ with $k,\ell\geq1$. Then with $X(\x)=\x$,  Stokes's Theorem yields
   
    \begin{eqnarray*}
   0&=&n\,\int_\K g_1^ig_2^j\,(1-g_1)^k(1-g_2)^\ell\lambda(d\x)\\
   &&+\int_\K \langle\x,\nabla [g_1^ig_2^j(1-g_1)^k(1-g_2)^\ell]\rangle\,\lambda(d\x)\\
      &=&n\,\int_{S} z_1^iz_2^j\,(1-z_1)^k (1-z_2)^\ell\,\#\lambda(d\z)\\
      &&+it_1\int_Sz_1^iz_2^j\,(1-z_1)^k (1-z_2)^\ell\,\#\lambda(d\z)\\
      &&+jt_2\int_Sz_1^iz_2^j\,(1-z_1)^k (1-z_2)^\ell\,\#\lambda(d\z)\\
      &&-kt_1\int_Sz_1^{i+1}z_2^j\,(1-z_1)^{k-1} (1-z_2)^\ell\,\#\lambda(d\z)\\
      &&-\ell t_2\int_Sz_1^{i}z_2^{j+1}(1-z_1)^{k} (1-z_2)^{\ell-1}\,\#\lambda(d\z)
\end{eqnarray*}

That is, for each $(i,j,k,\ell)\in\N^4$ with $k,\ell\geq1$ one obtains a linear constraint
                  that links the moments of $\phi^*$, that we denote by $L_{\bphi^*}(q_{ijk\ell})$ where $q_{ijk\ell}\in\R[\z]$ is the above polynomial under the integral sign. For instance,
                  \[0=L_{\bphi^*}(q_{0011})=n(\phi^*_{0}-\phi^*_{10}-\phi^*_{01}+\phi^*_{11})-t_1(\phi^*_0-\phi^*_{01})-t_2(\phi^*_0-\phi^*_{10}).\]
                  \[0=L_{\bphi^*}(q_{1111})=(n+t_1+t_2)\,(\phi^*_{11}+\phi^*_{22}-\phi^*_{21}-\phi^*_{12})
                  -t_1(\phi^*_{21}-\phi^*_{22})-t_2(\phi^*_{12}-\phi^*_{22}),\]
                  etc. So we can add these additional constraints to \eqref{relax-multi} and solve:
 \begin{equation}
 \label{relax-multi-homog}
 \begin{array}{rl}
   \tau_d=\displaystyle\max_{\bphi}\{\,\phi_0:&0\,\preceq\, \M_d(\bphi)\,\preceq \,\M_d(\#\lambda)\\
   &\M_d(x_i(1-x_j)\,\bphi) \succeq0,\quad j=1,2\\\
   &L_{\bphi}(q_{i,j,k,\ell})=0,\quad k,\ell\geq1;\:i+j+k+\ell\leq 2d\,\}.   \end{array}\end{equation}
Of course $\tau_d\leq \rho_d$ for all $d$ and therefore $\tau_d\downarrow\#\lambda(S)$
as $d$ increases.
Again, the difference with the univariate case is that now computing $\tau_d$ requires to solve the semidefinite program \eqref{relax-multi-homog} instead of a generalized eigenvalue problem.
However it is of same dimension as \eqref{relax-multi} and the convergence
$\tau_d\downarrow\#\lambda(S)$ is expected to be much faster than $\rho_d\downarrow\#\lambda(S)$ as we have been able to include additional constraints based on Stokes' theorem.

\section{Conclusion}

We have presented a new methodology to approximate (in principle as closely as desired) the Lebesgue volume of the sub-level set 
$\{\x: g(\x)\leq 1\}$ of a positive homogeneous $n$-variate polynomial $g$. Inspired by 
Jasour et al. \cite{jasour}, we formulate an equivalent ``volume" computation $\mu(I)$ of an interval
$I$ of the real line for a certain pushforward measure $\mu$. The novelty with respect to \cite{jasour} is that by using 
Stokes' theorem and exploiting the homogeneity of $g$, 
we are able to further reduce the problem to a hierarchy of generalized eigenvalue problems for Hankel matrices of increasing size,  with {\em no} optimization involved. To the best of our knowledge, this characterization of Lebesgue volume as the limit of eigenvalue problems of increasing size is new. Moreover, the methodology also extends to sub-level sets of arbitrary polynomials. It then reduces the Lebesgue volume computation in $\R^n$ to a ``volume" computation in $\R^d$
(where $d$ is the degree of the initial polynomial) for a certain pushforward measure. 
An extension to the case several homogeneous constraints has been also presented with a similar pattern 
as in the extension to the single non-homogeneous constraint. 
Preliminary results on a simple case reveal a drastic improvement on the approximation scheme proposed in \cite{jasour}.

\end{document}